\tikzstyle{every picture}+=[remember picture,inner xsep=0,inner ysep=0.5ex]
\newtheorem{theorem}{Theorem}[section]
\newtheorem{lemma}[theorem]{Lemma}
\newtheorem{definition}[theorem]{Definition}
\newtheorem{example}[theorem]{Example}
\newtheorem{pro}[theorem]{Proposition}
\newtheorem{remark}[theorem]{Remark}
\newtheorem{Q}[theorem]{Question}
\journal{.}
\newcommand{\z}{\mathbb{Z}}
\begin{document}

\begin{frontmatter}

\title{Relation between finite topological spaces and finitely presentable groups}

\author{Samuel Rold\'an, Jose Luis Mora, Edward Becerra}
\address{Mathematics Department\\ Universidad Nacional de Colombia}
\fntext[myfootnote]{ This research did not receive any specific grant from funding agencies in the public, commercial, or not-for-profit sectors.}



\address{Carrera 30 calle 45, Ciudad Universitaria, Bogot\'a, Colombia}

\begin{abstract}
In this paper it is shown how to construct a finite topological space $X$ for a given finitely presentable group $G$ such that $\pi_1(X)\cong G$. Our construction is not optimal in the sense that the cardinality of the space $X$ might not be the smallest possible. Our main result applies to a large class of interesting groups, including all finite groups.
\end{abstract}

\begin{keyword}
\texttt{Finite topological space, Finitely presented group, Properly discontinuous actions, Simplicial complex, Fundamental group}
\end{keyword}

\end{frontmatter}

\section{Introduction}
Finite topological spaces are naturally related via a combinatorial focus, based on the famous bijective correspondence between topologies defined on a finite set $X$ and preorders on $X$, as shown by Alexandroff \cite{alexandroff}. Under this bijection, partial orders on $X$ correspond to $T_0$ topologies on $X$. Moreover, there is a weak homotopy equivalence between any finite $T_0$–space and the geometric realization of its associated partial ordered set \cite{mccord1966singular}. These results allow the study of properties of the order complex of a finite partially ordered set by means of its associated finite $T_0$–space. The latter has proved to be very useful \cite{Barmak}. Indeed, the theory of finite topological spaces gives interesting tools to study partial ordered sets and polyhedra.

From a naive viewpoint, the topology defined on a finite space could seem uninteresting. Nevertheless, for a finite set $X$ and a fixed topology defined on it, we can define continuous functions from an interval $I$ to $X$ in a very natural way. Namely, we can consider paths, loops and homotopies on $X$. This paper begins with an effort to understand how the classical tools from algebraic topology translate to finite topological spaces. Later on, more interesting questions are considered relating the topological structure of $X$ with the classical topological invariants introduced in algebraic topology. Namely, for a finite topological space $X$ we consider its fundamental group $\pi_1(X)$. The group $\pi_1(X)$ does not have to be a finite group. We can find some interesting examples where a given topology on $X$ leads to $\pi_1(X)\cong \z$. This leads us to the main topic in question:
\begin{Q}\label{question1}
Let $G$ a finitely presentable group. Is it possible to find some finite topological space $X$ such that $\pi_1(X)\cong G$? 
\end{Q}
Finitely presentable groups appear in many topics in topology and algebra. For instance, the fundamental group of any compact surface (i.e. a closed surface without frontier) is a finitely presentable group. In fact, any finitely presentable group is the fundamental group of a compact  4-dimensional space-time manifold. On the other side, the theory of finitely presentable groups is largely related to coding theory. We might look for such connections between our result and those applied fields in future papers.    

This paper is organized as follows: In Section 2 we describe some tools from algebraic topology to be used throughout Sections 3 and 4, in order to justify our developments. In Section 3 we introduce an interesting construction of a finite topological space associated to a given finitely presentable group $G$. Then the main result is presented, Theorem \ref{main}, which can be applied to a large class of groups, including finite groups. Also in Section 3, another construction of a finite topological space associated to the finite abelian group $\z_n$ for $n\in \mathbb{N}$ is shown. This construction has the advantage of enabling the presentation of explicit examples, because the commutativity of the group $\z_n$ provides a simpler construction than the one presented in Section 2.

The authors want to express their gratitude to Alexis Sanchez and Roberto Vargas for contributing to this paper with stimulating discussions about Hasse diagrams. The third author wants to thank the first and second authors for their efforts in order to clarify the initial idea behind this paper. 

\section{Preliminaries}
In this section, in order to obtain an easier explanation of the results presented in later sections, some tools from algebraic topology are shown. These tools are standard in a classical course on this topic. Our  main references are the book in General Topology by J. Munkres \cite{munkres} Chapters 9-12, and Chapter 1 in \cite{Hatcher}.

Let $X$ be a path-connected topological space. We consider $\pi_1(X,x_0)$ the set of \emph{loops} in $X$ based on a fixed point $x_0\in X$, up to homotopy. This set is actually a group, called the \emph{fundamental group} of the space $X$,  with the composing operation defined for $[\alpha]$ and $[\beta]$ two classes in $\pi_1(X,x_0)$, via $[\alpha][\beta]:=[\alpha*\beta]$, where $*$ denotes concatenation (\cite{munkres}, Chapter 9, \S 51) of loops based on $x_0$. It is an easy exercise to prove that for a path-connected space $X$ the group $\pi_1(X,x_0)$ does not depend on the point $x_0$, so we can write $\pi_1(X)$. A topological space is called simply connected if its fundamental group is the trivial group.

\subsection{Group actions}
Let $X$ be a topological space, and let $G$ be group. An \emph{action} from the group $G$ onto the space $X$ is a map:
\begin{equation}
    \tau:G\times X\rightarrow X \text{   for each  }g\in G
\end{equation}
such that for each $g\in G$, the map $\tau|_{\{g\}\times X}:=\tau_g:X\rightarrow X$ is a homeomorphism and satisfies:
\begin{itemize}
    \item[i.] $\tau_e=id_X$ for $e$ the identity element in $G$.
    \item[ii.] $\tau_g(\tau_h(x))=\tau_{gh}(x)$ for all $g,h\in G$ and any $x\in X$.
\end{itemize}
 For short, we set $\tau_g(x)=gx$. The \emph{orbit space} $X/G$ is defined to be the quotient space obtained from $X$ by means of the equivalence relation $x\sim gx$ for all $x\in X$ and all $g\in G$. The equivalence class of $x$ is called the \emph{orbit} of $x$.
 \begin{example}
 Let $X$ be a set. Suppose that we have an action from a group $G$ on $X$. We can define an action from the group $G$ on the set $\wp(X)$ (parts of $X$) via the formula
\begin{equation}\label{acinparts}
g(A)=\{ga\in X \mid a\in A\}\mbox{   for }g\in G, A\subset X.    
\end{equation}
It is a straightforward verification that the action defined in Equation \ref{acinparts} satisfies the conditions to be a well defined action on $\wp(X)$. 
 \end{example}
\begin{definition}
If $G$ is a group of homeomorphisms of $X$, the action of $G$ on $X$ is said to be properly discontinuous if for every $x\in X$ there is a neighborhood $U$ of $x$ such that $g(U)$ is disjoint from $U$ whenever $g\not= e$. ($e$ is the identity element of $G$.)

\end{definition}
A topological space $X$ is defined to be locally path-connected if for each point $x\in X$ there exists a neighbourhood $U_x$ of $x$ such that $U_x$ is path-connected.
\begin{theorem}\label{theo:MainHatcher}[\cite{Hatcher}, Ch.1 Proposition 1.40 Part (c)] Let X be a simply connected and locally path-connected topological space endowed with an action from a group $G$. If the action is provided to be properly discontinuous, the group $G$ is isomorphic to the group $\pi_1(X/G)$.

\end{theorem}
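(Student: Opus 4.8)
The plan is to establish the isomorphism $G \cong \pi_1(X/G)$ by exhibiting the quotient map $p : X \to X/G$ as a covering map and then invoking the classical relationship between a covering space, its group of deck transformations, and the fundamental group of the base. This is the statement being quoted from Hatcher, so the proof I sketch is essentially the one Hatcher gives, reconstructed from the hypotheses available to us.

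First I would show that the quotient map $p : X \to X/G$ is a covering map. This is where the properly discontinuous hypothesis does its work: given $x \in X$, choose a neighborhood $U$ of $x$ with $g(U) \cap U = \emptyset$ for all $g \neq e$. The key observation is that the translates $\{g(U)\}_{g \in G}$ are pairwise disjoint (if $g(U) \cap h(U) \neq \emptyset$ then $(h^{-1}g)(U) \cap U \neq \emptyset$, forcing $h^{-1}g = e$), so $p$ restricts to a homeomorphism on each translate and $p^{-1}(p(U)) = \bigsqcup_{g \in G} g(U)$ is evenly covered. I would also need to check that $p$ is open, which follows because $p^{-1}(p(V)) = \bigcup_{g \in G} g(V)$ is a union of open sets for any open $V$, so $p(V)$ is open in the quotient topology. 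Local path-connectedness of $X$ guarantees we may shrink $U$ to a path-connected neighborhood, so $X/G$ is locally path-connected as well, which is needed for the covering-space machinery to apply cleanly.

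Next, since $X$ is simply connected, it is the universal cover of $X/G$, and the deck transformation group of a universal covering is isomorphic to $\pi_1$ of the base. I would argue that the action of $G$ is precisely the deck transformation group: each $\tau_g$ is a homeomorphism of $X$ satisfying $p \circ \tau_g = p$ by the definition of the orbit relation, so $G$ maps into $\mathrm{Deck}(p)$, and this map is injective because the action is free (a consequence of proper discontinuity: if $gx = x$ then $g(U) \cap U \neq \emptyset$, so $g = e$). Surjectivity onto the deck group holds because $X/G$ is exactly the orbit space, so $G$ acts transitively on each fiber, and for a normal (here, universal) covering the deck group also acts transitively on fibers; a deck transformation is then determined by where it sends a single point, forcing it to coincide with some $\tau_g$. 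The standard theorem identifying $\mathrm{Deck}(p) \cong \pi_1(X/G)$ for a universal cover then completes the argument.

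The main obstacle is verifying that $p$ is a covering map in full rigor, in particular the even-covering condition and openness of $p$; the rest of the argument is a matter of correctly identifying $G$ with the deck transformation group and citing the standard classification of covering spaces. A secondary subtlety is ensuring that $X/G$ is path-connected and locally path-connected so that the universal-cover characterization of the fundamental group applies, but path-connectedness of $X/G$ follows from path-connectedness of $X$ (implicit in simple connectedness, together with local path-connectedness) and continuity of $p$, and local path-connectedness descends to the quotient via the evenly covered neighborhoods.
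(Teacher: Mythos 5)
Your proposal is correct, and it is essentially the same argument as the paper's source: the paper states this theorem without proof, citing Hatcher's Proposition 1.40(c), and your reconstruction---proper discontinuity makes $p : X \to X/G$ an (evenly covered, open) covering map, simple connectedness makes $X$ the universal cover, and $G$ is identified with the deck transformation group, hence with $\pi_1(X/G)$---is precisely the standard proof given there.
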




\section{Construction for a finitely presentable group $G$}\label{sections}

Let $G$ be a finitely presentable group with presentation $\langle S \mid R\rangle$ where $S$ is a non empty finite set and $R$ a finite subset of $F_S$, the free group generated by $S$. By definition, every $r\in R$ is a word of $F_S$ with $length(r)$ letters in the alphabet $S$. Let us denote $r[i]$ the $i-$th letter of $r$, $r[0]=e$ be the identity element of $F_S$ and $r[i,j]=r[i]r[i+1]\cdots r[j]$  for $0\leq i\leq j\leq length(r)$. Note that there is a homomorphism $\varphi:F_S\rightarrow G$ such that $Ker(\varphi)=\langle R\rangle$ and $G\cong F_S/\langle R\rangle$; therefore, two elements $w,w'\in F_S$ will represent the same element in $G$ if $\varphi(w)=\varphi(w')$. This will be denoted $w=_{_G} w'$.

\begin{theorem}
Let G be a finitely presentable group. There exists a reduced presentation $\langle S \mid R\rangle$ in the sense that for every $s\in S$ and every $r\in R$, $s\neq_Ge$ and $r[i,j]=_{_G}e$ only for $i=0,1$ and $j=length(r)$.

\end{theorem}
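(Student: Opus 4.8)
The plan is to start from an arbitrary finite presentation $\langle S\mid R\rangle$ of $G$ and to transform it into a reduced one by a finite sequence of Tietze transformations, keeping the presented group isomorphic to $G$ at every step. I would split the argument into two phases, one for each required property.

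In the first phase I would arrange that every generator is nontrivial. Whenever some $s\in S$ satisfies $s=_{_G}e$, the letter $s$ lies in $Ker(\varphi)=\langle R\rangle$, so the relator $s$ is a consequence of $R$ and may be adjoined without changing $G$; using it I can eliminate the generator $s$, substituting the empty word for every occurrence of $s$ in the remaining relators. Deleting relators that become empty and repeating this finitely many times (the number of generators strictly drops) yields a presentation in which no generator is trivial. After this phase no surviving relator can have length $\le 1$: a relator of length $1$ would be a single letter $s^{\pm 1}$ equal to $e$ in $G$, i.e. a trivial generator, which has just been removed. Consequently every contiguous subword of length $1$ is nontrivial, a fact I will use repeatedly below.

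The second phase removes trivial proper subwords. Suppose some relator $r$ of length $n$ has a proper subword $u:=r[i,j]$ with $u=_{_G}e$. First I would cyclically rotate $r$ — a conjugation, hence harmless for the normal closure $\langle R\rangle$ — so that $u$ becomes a prefix, writing the rotated relator as $uw$ with $w$ the complementary factor. Since $u=_{_G}e$ and $uw=_{_G}e$ we obtain $w=_{_G}e$; and because $u\in\langle R\rangle$ it may be adjoined as a redundant relator, after which $r$ becomes a consequence of $\{u,w\}$ and may be discarded. Thus $r$ is replaced by the two relators $u$ and $w$. By the first phase, length-$1$ subwords are nontrivial, so both $u$ and $w$ have length $\ge 2$ (if $w$ had length $1$ it would be a nontrivial single letter equal to $e$, a contradiction, and likewise for $u$). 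Assigning to a presentation the measure $\sum_{r\in R} length(r)^2$ and using that $n^2>a^2+b^2$ whenever $a+b=n$ with $a,b\ge 1$, each such split strictly decreases the measure; hence only finitely many splits occur, and when none is possible every relator satisfies the required subword condition.

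The steps I expect to need the most care are the verification that the rotate-and-split move is a genuine Tietze transformation — the crux being that it is precisely the hypothesis $u=_{_G}e$ that lets $u$ be adjoined as a redundant relator, so that $r$ may then be traded for $w$ — and the choice of a strictly decreasing termination measure, since the \emph{total} length of the relators is preserved by a split and only a convex quantity such as the sum of squares detects progress. A minor boundary case is the trivial group, for which no nonempty $S$ can satisfy $s\neq_{_G}e$; I would either exclude it explicitly or handle it separately.
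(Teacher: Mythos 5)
Your proposal takes essentially the same route as the paper: eliminate the generators equal to $e$, and repeatedly replace a relator $r$ containing a trivial proper subword by that subword together with its complementary word (the paper splits $r$ directly into $r[n,m]$ and $r[1]\cdots r[n-1]r[m+1]\cdots r[length(r)]$, without your cyclic rotation, but this is the same move). Your argument is correct, and it additionally supplies the Tietze-transformation justification and the sum-of-squares termination measure that the paper's one-line proof leaves implicit.
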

\begin{proof}
 The case when there is a $s\in S$ such that $s=_{_G} e$ is trivial, and if there exists $r\in R$ and $n<m$ such that $r[n,m]=_{_G} e$, then we replace $R$ by $$\left(R-\{r\}\right)\cup \{r[n,m],r[1]\cdots r[n-1]r[m+1]\cdots r[length(r)]\}$$
\end{proof}

For each presentation $\langle S \mid R\rangle$ we can construct its Cayley complex $X_G$. To do this we will perform an inductive construction. Let the vertices or 0-cells be the elements of G; these will form the 0-skeleton of the complex $X_{G,0}$. Inductively, we will attach (n+1)-cells to the n-skeleton $X_{G,n}$ with attachment maps $\gamma_n:S^n\rightarrow X_{G,n}$. For each element $(g,s)\in G\times S$ we put a 1-cell connecting the vertices $g$ and $gs$. Now, for every element $(g,r)\in G\times R$ we attach a 2-cell to the path given by the edges denoted by  $$(gr[0,0],r[1])(gr[0,1],r[2])\cdots(gr[0,length(r)-1], r[length(r)])$$
or equivalently, the path that passes through the vertices
$$gr[0,0],gr[0,1],\ldots, gr[0,length(r)].$$

\begin{remark}
\label{remark:SigmaG}
We will denote by $\Sigma_G $ the set of cells of the Cayley complex $X_G$. We can identify the 0-cells with $G$, the 1-cells with $G\times S$ and the 2-cells with $G\times R$; therefore, we will denote the elements of $\Sigma_G$ with elements of $G\cup (G\times S)\cup(G\times R)$.
\end{remark}

We want the closure of every cell to be homeomorphic to a closed ball in  some $\mathbb{R}^n$, or equivalently, for the attaching maps to be embeddings. By construction, it is sufficient that the attaching maps are injective. This means that each 1-cell has non overlapping endpoints and the closed path in the boundary of each 2-cell does not self-intersect. Without loss of generality, we can assume that we are working with a reduced presentation. With such presentation, the edge $(g,s)$ has endpoints $g,gs\in G$ and $g=_{_G}gs$ would imply the contradiction $s=_{_G}e$. If the closed path in the boundary of a 2-cell $(g,r)$ self-intersects, it has to self intersect at a vertex; this would mean that $gr[0,n]=_{_G}gr[0,m]$ and therefore $r[n+1,m]=_{_G}e$ for $n\neq 0,1$ or $m\neq length(r)$ which would be a contradiction.

The previous condition on the Cayley complex allows us to construct a simplicial complex performing a generalized barycentric subdivision (\cite{lundell2012topology}, pp. 80-82). Let $\tilde{\Sigma}_G\subseteq \wp (\Sigma_G)$ be the simplicial complex with n-simplexes corresponding to the non-empty chains with the order $\preccurlyeq$ given by the condition \textit{``is in the boundary of''} in $\Sigma_G$. This way our simpleces have n-cells of $X_G$ such that for every pair of them, one is in the boundary of the other; as an example, for every $r\in R$ there is a 2-simplex $\{(e,r),(e,r[1]),e\}\in\tilde{\Sigma}_G$. We want $\tilde{\Sigma}_G$ to be a topological space. With this in mind, we endow $\tilde{\Sigma}_G$ with the Alexandrov topology induced by inclusion; that is, the topology $\tau=\{S\subseteq\tilde{\Sigma}_G:\forall \sigma,\sigma'\in S\, \ (\sigma\in S\ \land\ \sigma'\subseteq \sigma)\ \rightarrow\ \sigma' \in S  \}$ with base $\{star(\sigma):\sigma\in \tilde{\Sigma}_G\}$ and $star(\sigma)=\{\lambda\in\tilde{\Sigma}_G:\sigma\subseteq \lambda\}$, the usual definition for simplicial complexes.

\begin{lemma}\label{lem:connectednes}
$\tilde{\Sigma}_G$ is path connected, simply connected and locally path connected.
\end{lemma}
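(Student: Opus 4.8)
The three assertions split into a soft part and a substantial part. The plan is to dispatch path connectedness and local path connectedness with general facts about Alexandrov spaces, and to reduce simple connectedness, via McCord's theorem, to the classical simple connectedness of the Cayley complex $X_G$.

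For local path connectedness I would use only that $\tilde\Sigma_G$ carries the Alexandrov topology, so each point has the minimal open neighbourhood $star(\sigma)$. The standard fact that two comparable elements of a poset are joined by a path (the constant-with-one-jump map $[0,1]\to\tilde\Sigma_G$ is continuous in the Alexandrov topology) shows that $star(\sigma)$, every element of which is comparable to $\sigma$, is path connected; since these sets form a base, $\tilde\Sigma_G$ is locally path connected. For path connectedness the same principle reduces the problem to showing the poset $\tilde\Sigma_G$ is connected under $\preccurlyeq$. First I would observe that every cell lies above some $0$-cell $g\in G$, since by the definition of $\preccurlyeq$ each $1$-cell and each $2$-cell has a vertex of $G$ in its boundary. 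Then, because $S$ generates $G$, any two vertices $g,g'$ are linked by a zig-zag $g\preccurlyeq(g,s_1)\succcurlyeq gs_1\preccurlyeq\cdots\preccurlyeq g'$ running along the $1$-cells of the Cayley graph; concatenating the resulting paths gives path connectedness.

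Simple connectedness is the real content, and the plan is to transport the computation of $\pi_1$ to $X_G$. By construction $\tilde\Sigma_G=\mathcal K(\Sigma_G)$ is the order complex of the face poset $\Sigma_G$ of $X_G$, that is, the barycentric subdivision of $X_G$; because the reduced presentation makes all attaching maps embeddings, $X_G$ is a \emph{regular} CW complex and hence $|\tilde\Sigma_G|$ is homeomorphic to $X_G$. I would then apply McCord's weak homotopy equivalence \cite{mccord1966singular} between the Alexandrov space $\tilde\Sigma_G$ and the realization of its order complex (valid in the locally finite setting, cf. \cite{Barmak}) to obtain $\pi_1(\tilde\Sigma_G)\cong\pi_1(|\tilde\Sigma_G|)\cong\pi_1(X_G)$. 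Finally I would invoke the classical fact that the Cayley complex of a presentation is simply connected, being the universal cover of the presentation complex $K_{\langle S\mid R\rangle}$, whose fundamental group is $G$ \cite{Hatcher}; thus $\pi_1(X_G)=1$ and $\tilde\Sigma_G$ is simply connected.

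The main obstacle is making the two identifications in the last paragraph airtight. One must use that $X_G$ is regular --- precisely what the reduced-presentation discussion preceding the lemma secures, guaranteeing that the order complex of $\Sigma_G$ genuinely realizes to $X_G$ rather than to a quotient of it --- and one must check that McCord's weak equivalence still applies when $G$, and hence $\tilde\Sigma_G$, is infinite, so that $\pi_1$ really transfers. Once these conventions are matched, triviality of $\pi_1(\tilde\Sigma_G)$ follows from simple connectedness of the universal cover $X_G$.
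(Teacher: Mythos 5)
Your proof is correct, and on the substantial point --- simple connectedness --- it takes the same route as the paper: both transport $\pi_1$ through McCord's weak homotopy equivalence between $\tilde{\Sigma}_G$ and $|\tilde{\Sigma}_G|$ \cite{mccord1966singular}, identify $|\tilde{\Sigma}_G|$ with $X_G$ via the generalized barycentric subdivision that the reduced presentation makes possible, and use that the Cayley complex is simply connected (a fact the paper uses silently by writing $\pi_1(X_G,x_0)=0$, and which you justify explicitly as the universal cover of the presentation complex --- a worthwhile addition). Where you genuinely diverge is in the two softer claims. The paper extracts path connectedness from the same weak equivalence ($\pi_0(\tilde{\Sigma}_G,\sigma_0)=\pi_0(X_G,x_0)=0$) and quotes McCord's corollary that Alexandrov spaces are locally path connected, so one citation does triple duty; you instead argue both facts by hand inside the poset: comparable points of an Alexandrov space are joined by the ``one-jump'' path, so each basic open set $star(\sigma)$ is path connected, giving local path connectedness, and a zig-zag through the Cayley graph, available because $S$ generates $G$, gives global path connectedness. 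Your version is more elementary and self-contained --- it needs McCord only for $\pi_1$ and makes visible exactly where generation of $G$ by $S$ enters --- at the cost of length; the paper's is shorter but leans entirely on quoted results. Two small points to tighten in yours: the zig-zag must be run among points of $\tilde{\Sigma}_G$, i.e.\ among chains, as in $\{g\}\subseteq\{g,(g,s_1)\}\supseteq\{(g,s_1)\}\subseteq\{gs_1,(g,s_1)\}\supseteq\{gs_1\}\subseteq\cdots$, rather than among the cells of $\Sigma_G$ themselves; and to reach an arbitrary $g'$ from $g$ you may need to traverse some edges backwards (using $s^{-1}$ with $s\in S$), which is harmless since paths can be reversed. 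Your caution about McCord's theorem for infinite $G$ resolves in your favor: his weak equivalence holds for arbitrary simplicial complexes and Alexandrov spaces, with no finiteness hypothesis.
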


\begin{proof}
We know that there is a weak homotopy equivalence between $\tilde{\Sigma}_G$ and its geometric realization $|\tilde{\Sigma}_G|$ (\cite{mccord1966singular}, Theorem 2). Also, we know that the latter is homeomorphic to $X_G$ as $\tilde{\Sigma}_G$ is the generalized barycentric subdivision of $X_G$. Therefore, there is a weak homotopy equivalence between $X_G$ and $\tilde{\Sigma}_G$. In particular, 
$$0=\pi_0(X_G,x_0)=\pi_0(\tilde{\Sigma}_G,\sigma_0),$$
$$0=\pi_1(X_G,x_0)=\pi_1(\tilde{\Sigma}_G,\sigma_0),$$
which means that $\tilde{\Sigma}_G$ is path connected and simply connected. We also have that a space with an Alexandrov topology is  locally path connected (\cite{mccord1966singular}, Corollary of lemma 6).
\end{proof}

Now, we want to define an action of $G$ on $\tilde{\Sigma}_G$. For this, we will define an action of $G$ over $\Sigma_G$ and for each $\sigma\in\tilde{\Sigma}_G\subset \wp(\Sigma_G)$, we define the action of $G$ with $g\sigma=\{gx:x\in\sigma\}$. Note that this is well defined as an action on $\tilde{\Sigma}_G$ if and only if  the action of $G$ over $\Sigma_G$ sends chains to chains, or equivalently, if the action of $G$ over $\Sigma_G$ preserves the order induced by inclusion. Every action of $G$ on a set $A\subseteq \wp(\Sigma_G)$ that can be defined this way will be called an induced action.

\begin{lemma}
\label{Theo:PropDisc}
Let $G$ act on $\tilde{\Sigma}_G\subseteq\wp(\Sigma_G)$ as an induced action, $\sigma\in\tilde{\Sigma}_G$ and $g\in G$ different from the identity of $G$. If the action of $g$ on $\Sigma_G$ does not have fixed points and the action of $g$ on $\Sigma_G$ preserves the dimension of the n-cells, then $star(\sigma)\cap g (star(\sigma))=\phi$.
\end{lemma}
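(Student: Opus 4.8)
The plan is to reduce the statement to showing that two stars are disjoint, and then to extract a contradiction from any common simplex by exploiting the fact that a chain in $\Sigma_G$ contains at most one cell of each dimension. First I would note that, since the action on $\tilde{\Sigma}_G$ is induced from an order-preserving action on $\Sigma_G$, it is a simplicial automorphism: $\lambda\supseteq\sigma$ holds if and only if $g\lambda\supseteq g\sigma$, and $g^{-1}$ acts in the same way. This lets me rewrite the translated star as the star of the translated simplex,
\[ g(star(\sigma)) = \{g\lambda : \sigma\subseteq\lambda\} = \{\mu\in\tilde{\Sigma}_G : g\sigma\subseteq\mu\} = star(g\sigma), \]
so that it suffices to prove $star(\sigma)\cap star(g\sigma)=\emptyset$.

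Next I would argue by contradiction: suppose some $\lambda\in\tilde{\Sigma}_G$ lies in both stars, so that $\sigma\subseteq\lambda$ and $g\sigma\subseteq\lambda$. The key structural observation is that, by definition, every simplex of $\tilde{\Sigma}_G$ is a chain for the relation ``is in the boundary of''; and whenever one cell lies in the boundary of another it has strictly smaller dimension. Hence any two distinct cells of a chain have distinct dimensions, i.e. $\lambda$ contains at most one cell of each dimension. Writing $\sigma=\{c_0\prec\cdots\prec c_k\}$, for each index $i$ both $c_i$ and $gc_i$ belong to $\lambda$, and because $g$ preserves the dimension of cells they share the same dimension. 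By the previous remark this forces $c_i=gc_i$. Since $\sigma$ is non-empty, $c_0$ is then a fixed point of $g$ on $\Sigma_G$, contradicting the hypothesis that $g\neq e$ acts without fixed points. Therefore no such $\lambda$ exists, and combined with the first step we conclude $star(\sigma)\cap g(star(\sigma))=\emptyset$.

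I do not expect a serious obstacle here, since the argument involves no estimates. The only delicate points are the two reformulations that must be set up correctly before the hypotheses can be applied together: the identification $g(star(\sigma))=star(g\sigma)$, which uses that an induced action is a simplicial automorphism, and the observation that chains in the boundary order are dimension-injective. Once both are in place, the hypotheses of dimension preservation and freeness of the action combine immediately to produce the fixed point $c_0$, and the proof closes.
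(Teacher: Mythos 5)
Your proof is correct and follows essentially the same route as the paper's: assume a common simplex, observe that it must contain both a cell of $\sigma$ and its $g$-translate, and derive a contradiction because these are distinct (freeness) yet of equal dimension, which is impossible inside a chain ordered by the boundary relation. The only difference is cosmetic --- you explicitly justify the identification $g(star(\sigma))=star(g\sigma)$, which the paper uses tacitly, and you run the dimension argument over all cells of $\sigma$ where the paper fixes a single $\lambda\in\sigma$.
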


\begin{proof}
As $\sigma$ is non-empty, let $\lambda\in\sigma$ so that $star(\sigma)\subseteq star(\{\lambda\})$ and therefore $star(g\sigma)\subseteq star(\{g\lambda\})$. If we assume that there exists $\alpha\in star(\sigma)\cap star(g\sigma)$, then $\alpha\in star(\{\lambda\})\cap star(\{g\lambda\})$. Thus $\lambda, g\lambda\in\alpha$ but we have that $\lambda\neq g \lambda$ as the action of $g$ on $\Sigma_G$ does not have fixed points. Now, as $\alpha$ is a chain, $\lambda$ is in the boundary of $g\lambda$ or $g\lambda$ is in the boundary of $\lambda$ which would give us a contradiction as $\lambda$ and $g\lambda$ are n-cells of the same dimension.
\end{proof}

We define an action of an element $g\in G$ on $\Sigma_G$ for each n-cell with n=0,1,2 using Remark \ref{remark:SigmaG}.This is:

\begin{itemize}
    \item  $g\in G$ acting on the 0-cell $h\in G$ is the 0-cell $gh\in G$ 
    \item  $g\in G$ acting on the 1-cell $(h,s)\in G\times S$ is the 1-cell $g(h,s)=(gh,s)\in G\times S$ 
    \item  $g\in G$ acting on the 2-cell $(h,r)\in G\times R$ is the 2-cell $g(h,r)=(gh,r)\in G\times R$ 
\end{itemize}

With the induced action of $G$ over $\tilde{\Sigma}_G$, we can form a space $\tilde{\Sigma}_G/G$, the quotient space of $\tilde{\Sigma}_G$ in which each point $\sigma$ is identified with all its images $g\sigma$ as $g$ ranges
over $G$. The points of $\tilde{\Sigma}_G/G$ are thus the orbits $G\sigma= { g\sigma : g \in G }$ in $\tilde{\Sigma}_G$. 

\begin{theorem}
G is isomorphic to $\pi_1(\tilde{\Sigma}_G/G)$.
\end{theorem}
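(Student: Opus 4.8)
The plan is to apply Theorem \ref{theo:MainHatcher} to the space $\tilde{\Sigma}_G$ equipped with the induced $G$-action, since that theorem delivers exactly the desired isomorphism $G\cong\pi_1(\tilde{\Sigma}_G/G)$ once its hypotheses are verified. Two of those hypotheses---that $\tilde{\Sigma}_G$ is simply connected and locally path connected---are already supplied by Lemma \ref{lem:connectednes}. Hence the entire burden of the argument is to confirm that the map described by the three bullet formulas is a well-defined induced action by homeomorphisms and that it is properly discontinuous.

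First I would confirm that the formulas define a genuine action of $G$ on $\Sigma_G$: the identity $e$ fixes every cell, and the relation $\tau_g\tau_h=\tau_{gh}$ follows immediately on each of the three cell types from associativity of multiplication in $G$. Next I would check the order-preservation condition singled out just before Lemma \ref{Theo:PropDisc}, which is what promotes this to a well-defined action on $\tilde{\Sigma}_G$: the boundary of a $1$-cell $(h,s)$ consists of the $0$-cells $h$ and $hs$, and applying $g$ sends these to $gh$ and $g(hs)=(gh)s$, which are precisely the boundary $0$-cells of $g(h,s)=(gh,s)$; the analogous bookkeeping for the $2$-cells $(h,r)$ confirms that $g$ respects the relation $\preccurlyeq$. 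Consequently $g$ carries chains to chains, each $\tau_g$ is an order-isomorphism with inverse $\tau_{g^{-1}}$, and is therefore a homeomorphism of $\tilde{\Sigma}_G$ in the Alexandrov topology.

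The heart of the proof is proper discontinuity, and here I would reduce everything to Lemma \ref{Theo:PropDisc} by verifying its two hypotheses for every $g\neq e$. Fixed-point freeness on $\Sigma_G$ is direct: the action alters only the $G$-coordinate of a cell by left multiplication, so $gh=h$, $(gh,s)=(h,s)$, or $(gh,r)=(h,r)$ each force $g=e$. Dimension preservation is immediate from the definition, since $g$ sends $0$-, $1$-, and $2$-cells to cells of the same type. Lemma \ref{Theo:PropDisc} then yields $star(\sigma)\cap g(star(\sigma))=\phi$ for every $\sigma\in\tilde{\Sigma}_G$ and every $g\neq e$. Because $star(\sigma)$ is a basic open set containing $\sigma$, it serves as the neighborhood required by the definition of a properly discontinuous action.

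With the action shown to be well defined, to act by homeomorphisms, and to be properly discontinuous, and with simple connectedness and local path connectedness furnished by Lemma \ref{lem:connectednes}, all hypotheses of Theorem \ref{theo:MainHatcher} hold for $\tilde{\Sigma}_G$, and the conclusion $G\cong\pi_1(\tilde{\Sigma}_G/G)$ follows at once. I expect the only genuinely delicate point to be the order-preservation verification, as it is the single step where the explicit combinatorics of the Cayley complex and its barycentric subdivision must be matched against the formula for the action; the remaining checks are routine, the substantive work having been front-loaded into the two preceding lemmas.
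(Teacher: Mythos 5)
Your proposal is correct and follows essentially the same route as the paper's proof: verify the hypotheses of Lemma \ref{Theo:PropDisc} (fixed-point freeness and dimension preservation) to obtain proper discontinuity via the star base, combine with Lemma \ref{lem:connectednes}, and apply Theorem \ref{theo:MainHatcher}. The only difference is that you spell out the verifications (well-definedness of the induced action, order preservation, and the two hypotheses of Lemma \ref{Theo:PropDisc}) that the paper dismisses as ``clear,'' which makes your write-up more complete than the original.
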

\begin{proof}
It is clear that the action of $G$ on $\Sigma_G$ we defined satisfies the hypothesis of Theorem \ref{Theo:PropDisc} for every $g\in G$ different from the group identity element. Therefore, the fact that the action is properly discontinuous follows directly from Lemma \ref{Theo:PropDisc} and that  $\{star(\sigma):\sigma\in \tilde{\Sigma}_G\}$ is a base for $\tilde{\Sigma}_G$. Now, using Lemma \ref{lem:connectednes}, we have all the hypothesis of Theorem \ref{theo:MainHatcher} (also including the path-connectedness assumed in all of Section 2) and therefore, using Theorem \ref{theo:MainHatcher}, we conclude that $G$ is isomorphic to $\pi_1(\tilde{\Sigma}_G/G)$.
\end{proof}

Now, we want to prove that $\tilde{\Sigma}_G/G$ is finite. First we will characterize every orbit of $\tilde{\Sigma}_G/G$. Let $G\sigma\in \tilde{\Sigma}_G/G$. As $\sigma$ is a non-empty finite chain, there exists $\alpha=\max\sigma$. For all $g\in G$, $G(g\sigma)=G\sigma$; therefore, without loss of generality, we can assume that $\alpha\in\{e\}\cup (\{e\}\times S)\cup(\{e\}\times R)$. If we fix $\alpha$, we have that there is only a finite number of $\beta\in\Sigma_G$ such that $\beta\preccurlyeq\alpha$ and therefore there is a finite number of $\sigma\in\tilde{\Sigma}_G/G$ with $\alpha=\max\sigma$. Furthermore, as $G$ is finitely presentable, there is only a finite amount of $\alpha\in\{e\}\cup (\{e\}\times S)\cup(\{e\}\times R)$ and we can conclude that  $\tilde{\Sigma}_G/G$ is finite.

\begin{theorem}\label{main} Let G have a finite presentation $\langle S \mid R\rangle$. Then $G\cong \pi_1(\tilde{\Sigma}_G/G)$ with
 $$|\tilde{\Sigma}_G/G|=1+3|S|+\Sigma_{r\in R}\left(4 \text{length}(r) +1\right)$$
\end{theorem}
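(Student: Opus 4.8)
The isomorphism $G\cong\pi_1(\tilde{\Sigma}_G/G)$ is already proved above, so my plan is to concentrate entirely on the cardinality formula. The starting point is the orbit analysis in the paragraph preceding the statement: every orbit $G\sigma$ contains a chain $\sigma$ whose maximal element $\alpha=\max\sigma$ can be normalized to lie in $\{e\}\cup(\{e\}\times S)\cup(\{e\}\times R)$. First I would upgrade this to an exact bijection by checking that the normalized representative is unique: if two normalized chains satisfy $\sigma_2=g\sigma_1$, then $g\,\max\sigma_1=\max\sigma_2$ with both maxima normalized, and since the action is free (on $0$-cells it is just left translation $h\mapsto gh$ on $G$, and it preserves dimension) this forces $g=e$, hence $\sigma_1=\sigma_2$. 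Consequently $|\tilde{\Sigma}_G/G|$ equals the number of chains of $\tilde{\Sigma}_G$ with normalized maximum, and I would compute it by splitting according to the dimension of $\alpha$.

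The organizing remark is that a chain with maximum $\alpha$ is $\{\alpha\}$ together with an arbitrary, possibly empty, chain drawn from the strict down-set $D_\alpha=\{\beta\in\Sigma_G:\beta\prec\alpha\}$; hence the number of chains with maximum $\alpha$ equals the number of chains (the empty one included) in $D_\alpha$. For $\alpha=e$ the down-set is empty and the only chain is $\{e\}$, contributing $1$. For a $1$-cell $\alpha=(e,s)$ the down-set is the pair of endpoints $\{e,s\}$, which are distinct since $s\neq_G e$ in a reduced presentation and are incomparable because both are $0$-cells; its chains are $\emptyset,\{e\},\{s\}$, giving $3$ per generator and $3|S|$ overall.

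The substantive case, and the one I expect to require the most care, is a $2$-cell $\alpha=(e,r)$. Here I would first identify $D_\alpha$ precisely as the boundary of $(e,r)$: the $\ell=\text{length}(r)$ edges $(r[0,i-1],r[i])$ for $1\le i\le\ell$ together with the vertices $r[0,i]$. Invoking the reduced presentation exactly as in the non-self-intersection argument given earlier, I would verify that $r[0,0],\dots,r[0,\ell-1]$ are pairwise distinct (with $r[0,\ell]=r[0,0]=e$ since $r=_G e$) and that the $\ell$ edges are pairwise distinct, so that $D_\alpha$ is a combinatorial cycle consisting of $\ell$ distinct vertices and $\ell$ distinct edges, each edge covering precisely its two endpoints. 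The delicate bookkeeping is then to enumerate the chains of this cycle poset: the empty chain ($1$), a single vertex ($\ell$), a single edge ($\ell$), and an edge paired with one of its two endpoints ($2\ell$), for a total of $1+4\ell=4\,\text{length}(r)+1$ chains per relator.

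Summing the three contributions will give
\[
|\tilde{\Sigma}_G/G|=1+3|S|+\sum_{r\in R}\bigl(4\,\text{length}(r)+1\bigr),
\]
as claimed. The only genuine risk of error lies in the $2$-cell count: one must use the reducedness of the presentation to rule out accidental coincidences among the boundary vertices and edges, and then count the incidences of the boundary cycle without double counting the length-two chains. Everything else is a routine tally.
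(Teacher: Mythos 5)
Your proposal is correct and follows essentially the same route as the paper: normalize the maximal cell of each orbit to lie over the identity, then enumerate the chains having that cell as maximum by cases on its dimension, obtaining $1$, then $3$ per generator, and $4\,\mathrm{length}(r)+1$ per relator. The only difference is added rigor, not method: you explicitly verify that the normalized representative is unique (via freeness and order-preservation of the action) and that the boundary vertices and edges of a $2$-cell are pairwise distinct (via reducedness), two points the paper's proof leaves implicit.
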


\begin{proof} 
We will prove the last statement of the theorem. Let $G\sigma\in\tilde{\Sigma}_G/G$ and $\alpha=\max\sigma$. As we saw in the last paragraph, we can assume $\alpha\in \{e\}\cup (\{e\}\times S)\cup(\{e\}\times R)$. Therefore, there are three possibilities:

\begin{enumerate}
    \item $\alpha=e$: As $e$ is minimal in $\Sigma_G$, $\sigma=\{e\}$.
    \item $\alpha=(e,s)$ for $s\in S$: $\sigma\in\{\{\alpha\},\{\alpha,e\},\{\alpha,s\}\}$.
    \item $\alpha=(e,r)$ for $r\in R$: As we can assume $S\subset G$, we can interpret $r[0,i]\in F_G$ as an element of $G$ , and this case can be split in five:
    
    \begin{enumerate}

        \item $\sigma=\{\alpha, r[0,i]\}$ for $i=0,\cdots,length(r)-1$.
        \item $\sigma=\{\alpha, (r[0,i],r[i+1])\}$ for $i=0,\cdots,length(r)-1$.
        \item $\sigma=\{\alpha, (r[0,i],r[i+1]), r[0,i]\}$ for $i=0,\cdots,length(r)-1$.
        \item $\sigma=\{\alpha, (r[0,i],r[i+1]), r[0,i+1]\}$ for $i=0,\cdots,length(r)-1$
        \item $\sigma=\{\alpha\}$.
        
    \end{enumerate}
    
    \end{enumerate}
    and the formula $|\tilde{\Sigma}_G/G|=1+3|S|+\Sigma_{r\in R}\left(4 \text{length}(r) +1\right)$ should be clear. 
    
\end{proof}

\section{The case of $G$ a finite abelian group}\label{sectionj}
Despite the generality and beauty of the presentation described in the previous section, it is important to highlight the difficulty in finding explicit examples using the results therein. Nevertheless, we know that finite abelian groups are more handleable than non-abelian ones. We realize that if a group $G$ is an abelian group, we can find an explicit finite topological space $X$ associated to the group $G$ in the sense that $\pi_1(X)\cong G$. This is great news, since explicit examples can be considered. 

In general, for a finite abelian group $G$ it is possible to define explicitly the topology on a finite set $X$ such that $\pi_1(X)\cong G$. For the groups $\z_n\cong \z/n\z$ there is a simpler way to do it than for an arbitrary group. Furthermore, by the known property $\pi_1(X\times Y)\cong \pi_1(X)\times \pi_1(Y)$ of the fundamental group together with the Theorem of Classification of Finite Abelian Groups, it is enough to present the construction for the group $\mathbb{Z}_n$ for $n\in \mathbb{N}$.

\subsection{The construction for the group $\z_n$}
Let $G=\mathbb{Z}_n$ with $n>1$ and $n\in\mathbb{N}$. Let us consider the space $\tilde{X}_G$ consisting of $n$ disks $D_1,\ldots,D_n$ with their boundary circles identified. There is a generator $g$ of $G$ which acts on this union by sending $D_i$ to $D_{i+1}$ via a $2\pi/n$ rotation with $i$ being taken mod $n$.\\
Let $a_1,\ldots,a_{2n}$ be $2n$ equidistant points on the boundary of $\tilde{X}_G$ and ordered in the clockwise direction. Also, let $a'_1,\ldots,a'_n$ be points in $\tilde{X}_G$ such that $a'_i$ is the center of the disk $D_i$ for $1\leq i\leq n$. We denote $b_{i,1},\ldots,b_{i,2n}$ as the $2n$ open edges between $a_1,\ldots,a_{2n}$ and $a'_i$ respectively for each  $1\leq i\leq n$. Notice that the edges $b_{i,1},\ldots,b_{i,2n}$ divide the disk $D_i$ in $2n$ areas. Let us denote the open part of these $2n$ areas by $c_{i,1},\ldots,c_{i,2n}$ maintaining the coherence between these and the notation of the segments. The collection of all these elements will be denoted by $\tilde{T}_G$.
We proceed now to endow the set $\tilde{T}_G$ with a topology that makes continuous the map $F:\tilde{X}_G\rightarrow \tilde{T}_G$ which sends an element in $\tilde{X}_G$ to the element in $\tilde{T}_G$ that represents the location of the point. For instance, if $x\in\tilde{X}_G$ is located on the open edge represented by $b_{1,1}$ then $F(x)=b_{1,1}$. Thus, we will endow $\tilde{T}_G$ with the final topology with respect to the function $F$.

Now we are going to define the following equivalence relation in $\tilde{X}_G$: $x\sim y$ if both elements are located in a region represented by the same element in $\tilde{T}_G$. Then, we endow the set $\tilde{T}_G$ with the topology that makes this space homeomorphic to the quotient space $\tilde{X}_G/ \sim$. This means that the map $F=H\circ q:\tilde{X}_G\rightarrow\tilde{T}_G$ is continuous, where $q:\tilde{X}_G\rightarrow \tilde{X}_G/\sim$ is the quotient map and $H:\tilde{X}_G/\sim\ \rightarrow \tilde{T}_G$ is a homeomorphism.
Notice that the map
$$q_*:\pi_1(\tilde{X}_G)\rightarrow\pi_1(\tilde{X}_G/\sim)$$
is surjective and hence we have that $\pi_1(\tilde{T}_G)=0$. Thus, we have that $\tilde{T}_G$ is simply-connected, path-connected, locally path-connected and $G$ is a covering space action on it . Therefore, the map
$$p:\tilde{T}_G\rightarrow \tilde{T}_G/G$$ is the universal cover and hence $\pi_1(\tilde{T}_G/G)\cong G=\mathbb{Z}_n$.

Lastly, if we take into account the Fundamental Theorem  of finitely generated abelian groups and the fact that the fundamental group of a finite product of topological spaces is isomorphic to the finite product of the fundamental groups of the topological spaces, then it is possible to construct a finite topological space with fundamental group $G$ for $G$ any finitely generated abelian group.

\medskip

Now we are going to show explicitly what the open sets in the resulting topology are. For $1<n\in\mathbb{N}$ we define the sets
\begin{align*}
A' &=\{a'_1,\ldots,a'_{2n}\},&  A& =\{a_1,\ldots,a_{n}\},& B'& =\{b'_1,\ldots,b'_{2n}\},\\ 
B_i&=\{b_{i,1},\ldots,b_{i,2n}\},& C_i&=\{c_{i,1},\ldots,c_{i,2n}\},     
\end{align*}
for $1\leq i\leq n$. Let $\Sigma=A\cup A'\cup B\cup B'\cup C$, where $B=\cup_{i=1}^nB_i$ and $C=\cup_{i=1}^nC_i$. Endow the set $\Sigma$ with the following topology:
for $c_{i,j}\in C$ the basic open set will be $U_{c_{i,j}}=\{c_{i,j}\}$. For $b_{i,j}\in B$ the basic open set will be $U_{b_{i,j}}=\{b_{i,j}\}\cup U_{c_{i,j-1}}\cup U_{c_{i,j}}=\{c_{i,j-1},b_{i,j},c_{i,j}\}$ with $j$ taken mod $n$ and for $b'_i\in B'$ it will be $U_{b'_i}=\bigcup_{j=1}^nU_{c_{j,i}}\cup \{b'_i\}=\{c_{1,i},\ldots,c_{n,i},b'_i\}$. For $a_i\in A$ the basic open set will be $U_{a_i}=B_i\cup C_i\cup \{a_i\}$ and for $a'_i\in A'$ it will be $U_{a'_i}=\bigcup_{j=1}^n U_{b_{j,i}}\cup\{b'_{i-1},a'_i,b'_i\}=\{b'_{i-1},a'_i,b'_i,b_{1,i},c_{1,i-1},c_{1,i},\ldots,b_{n,i},c_{n,i-1},c_{n,i}\}$ with $j$ taken mod $n$ and $i$ taken mod $2n$. The collection of all these open sets together with the empty set form a topological basis because every intersection of any two elements is empty or contains at least one element of the form $c_{i,j}$, and the latter implies that the open set $U_{c_{i,j}}$ (which belong to the collection) is contained in the intersection.\\


Now we define the following equivalence relation: $a'_i\sim a'_j$ iff $i\equiv j\text{ mod } 2$. $a_i\sim a_j$ for all $1\leq i,j\leq n$. $b'_i\sim b'_j$ iff $i\equiv j\text{ mod } 2$. $b_{i,s}\sim b_{j,t}$ iff $2(j-i)\equiv t-s\text{ mod }2n$ and $c_{i,s}\sim c_{j,t}$ iff $2(j-i)\equiv t-s\text{ mod }2n$.\\
Let $\Sigma/\sim$ be the quotient space and $$p:\Sigma\rightarrow \Sigma/\sim,$$ the quotient map. Let $g:\Sigma\rightarrow \Sigma$ be the function defined by 
$$g(x)= \left\{ \begin{array}{lcc}
             a'_{i+2} &   if  & x=a'_i
             \\
             \\ a_{i+1} &  if & x=a_i \\
             \\ b'_{i+2} &  if & x=b'_i \\
             \\ b_{i+1,j+2} &  if & x=b_{i,j} \\
             \\ c_{i+1,j+2} &  if  & x=c_{i,j}
             \end{array}
   \right., $$
 with $i$ taken mod $n$, $j$ taken mod $2n$. We define the group $G= \langle g \rangle$ with composition of functions as the binary operation. $G$ is isomorphic to $\mathbb{Z}_n$ because $g^n(x)=x$; furthermore, the action of the group on $\Sigma$ is properly discontinuous:
\begin{align*}
     g(U_{a'_i})\cap U_{a'_i}&=U_{a'_{i+2}}\cap U_{a'_i}=\emptyset,  &g(U_{c_{i,j}})\cap U_{c_{i,j}}&=U_{c_{i+1,j+2}}\cap U_{c_{i,j}}=\emptyset,\\ 
     g(U_{b'_i})\cap U_{b'_i}&=U_{b'_{i+2}}\cap U_{b'_i}=\emptyset,& g(U_{b_{i,j}})\cap U_{b_{i,j}}&=U_{b_{i+1,j+2}}\cap U_{b_{i,j}}=\emptyset,\\ 
     g(U_{a_i})\cap U_{a_i}&=U_{a_{i+1}}\cap U_{a_i}=\emptyset.
\end{align*}

Next we are going to prove that $\Sigma$ is path-connected and locally path-connected: assume there are $U,V$ open sets with $U\not=\emptyset$ and $V\not=\emptyset$ such that $U\cup V=\Sigma$ and $U\cap V=\emptyset$. Notice that $(A\cup A')\not\subset U$ because otherwise we would have $V=\emptyset$. Similarly,  $(A\cup A')\not\subset V$. Therefore, there exist $x,y\in A\cup A'$ such that $x\in U$ and $y\in V$. Thus, $U_x\subset U$ and $U_y\subset V$, but $U_x\cap U_y\not=\emptyset$ because $x,y\in A\cup A'$. Hence, $U\cap V\not=\emptyset$ which is a contradiction. Then, $\Sigma$ is connected and also path-connected (these two invariants are equivalent in finite topological spaces).\\
To see that $\Sigma$ is locally path-connected the only thing remaining to be proved is that each one of the basic open sets is path-connected. For $x,y\in \Sigma$ we denote by $f_{y,x}:[0,1]\rightarrow U_x$ the function such that $f_{y,x}(a)=y$ for $0\leq a<1$, $f_{y,x}(1)=x$. Notice that if $U_y\subset U_x$, then $f_{y,x}$ is continuous; in fact $U_y\subset U_x$ for every $x\in \Sigma$ and $y\in U_x$ because $\Sigma$ is finite. Hence, for any $x\in \Sigma$ and $y\in U_x$ the function $f_{y,x}$ is continuous. Thus, there exists a path connecting each element $y$ in $U_x$ with $x$ and therefore $U_x$ is path-connected.

Finally, it will be proved that $\Sigma$ is simply connected in order to apply Theorem \ref{theo:MainHatcher} to show that $\mathbb{Z}_n$ is the fundamental group of $\Sigma/\sim$ and for this we need the following proposition.



\begin{pro}\label{4.1}
Let $f:I\rightarrow T$, $g:I\rightarrow T$ be two paths such that $f(0)=g(0)$, $f(1)=g(1)$ and for any $x\in I$, $f(x)\in U_{a'_i}$ and $g(x)\in U_{a'_i}$. Then $f$ and $g$ are homotopic. 
\end{pro}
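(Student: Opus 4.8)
The plan is to prove that the basic open set $U_{a'_i}$ is contractible, hence simply connected, and then invoke the standard fact that in a simply connected space any two paths sharing both endpoints are homotopic rel endpoints. The hypothesis guarantees that $f$ and $g$ take all their values in $U_{a'_i}$, so I may regard them as paths in the subspace $U_{a'_i}$, carry out every homotopy inside $U_{a'_i}$, and then compose with the inclusion $U_{a'_i}\hookrightarrow T$ to obtain the homotopy in $T$. This reduces the proposition to a local statement about one basic open set.

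First I would record the order structure of $U_{a'_i}$. Since $T$ carries an Alexandrov topology, every $y\in U_{a'_i}$ satisfies $U_y\subseteq U_{a'_i}$, because $U_y$ is the smallest open set containing $y$ and $U_{a'_i}$ is an open set containing $y$. Inspecting the explicit lists of basic open sets confirms this directly: $U_{b'_i}$, $U_{b'_{i-1}}$, every $U_{b_{j,i}}=\{c_{j,i-1},b_{j,i},c_{j,i}\}$, and every singleton $U_{c_{j,i}}$ and $U_{c_{j,i-1}}$ are all contained in $U_{a'_i}$. Thus $a'_i$ is a maximum of $U_{a'_i}$ for the specialization order, and $U_{a'_i}$ is exactly the minimal open set of $a'_i$.

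Next I would build the explicit contraction $K:U_{a'_i}\times I\to U_{a'_i}$ given by $K(y,t)=y$ for $t<1$ and $K(y,1)=a'_i$. This is precisely the two-variable analogue of the map $f_{y,a'_i}$ already shown continuous whenever $U_y\subseteq U_{a'_i}$, and verifying its continuity is the one genuine computation in the argument: for $V\subseteq U_{a'_i}$ open, if $a'_i\notin V$ then $K^{-1}(V)=V\times[0,1)$, whereas if $a'_i\in V$ then $V=U_{a'_i}$ (as $U_{a'_i}$ is the minimal open set of $a'_i$) and $K^{-1}(V)=U_{a'_i}\times I$; in both cases the preimage is open. Hence $K$ is a homotopy from $\mathrm{id}_{U_{a'_i}}$ to the constant map at $a'_i$, so $U_{a'_i}$ is contractible and in particular simply connected and path connected.

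Finally, with $U_{a'_i}$ simply connected, the loop $f*\bar g$ based at $f(0)=g(0)$ is null-homotopic, so $f\simeq f*(\bar g* g)\simeq (f*\bar g)*g\simeq g$ rel endpoints, with every intermediate homotopy taking values in $U_{a'_i}$ and hence in $T$. The main obstacle is the continuity check for $K$ at time $t=1$, i.e.\ controlling the fibre over $a'_i$; the useful guiding remark, exactly as for $f_{y,a'_i}$, is that in a finite space a path may only ``jump'' upward to a larger point, and $a'_i$ is the unique maximum of $U_{a'_i}$, which is what forces the collapse to be continuous. Everything after that continuity verification is a formal consequence of contractibility.
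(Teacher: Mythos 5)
Your proof is correct and follows essentially the same route as the paper: the paper's proof simply cites Barmak's Remark 1.2.8 for the fact that $U_{a'_i}$, being the minimal open set of a point in a finite space, is contractible (hence simply connected), and then concludes that the two paths are homotopic. Your explicit contraction $K$ and its continuity check are precisely the content of that cited remark, so you have merely made the paper's citation self-contained.
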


\begin{proof}
From \cite{Barmak} Remark 1.2.8 we have that $U_a$ is simply connected and therefore $f$ and $g$ must be homotopic. 
\end{proof}

Let $\alpha:I\rightarrow\Sigma$ be any loop with $\alpha(0)=\alpha(1)=a'_1$. For a fixed $1\leq i\leq n$, the open set $\alpha^{-1}(U_{a_i})$ can be viewed as the countable union of disjoint open intervals. Let $(d,d')$ be one of those intervals. Notice that $\alpha(d)\in A'\cup B'$ and $\alpha(d')\in A'\cup B'$ as well. Moreover, we have that $A'\cup B'\cap U_{a_i}=\emptyset$, and therefore there is $\epsilon>0$ such that $\alpha(x)\not=a_i$ for $x\in(d,d+\epsilon)\cup (d'-\epsilon,d')$. If we restrict the function $\alpha$ to the interval $[d+\epsilon/2,d'-\epsilon/2]$ and consider it as a path, then by Proposition $\ref{4.1}$ we have that this path is homotopic to a path $\gamma$ with $a_i\not\in\operatorname{Im}(\gamma)$. Applying this process for every open interval in $\alpha^{-1}(U_{a_i})$ and every $i$, we can construct a loop $\alpha_1$ homotopic to $\alpha$ such that $A\cap\operatorname{im}(\alpha_1)=\emptyset$. Let $\alpha_2$, $\alpha_3$ be the paths defined by 
\begin{align*}
 \alpha_2(x)&= \left\{ \begin{array}{lcc}
             a'_j &   if  & \alpha_1(x)=b_{i,j}\\
             b'_j &  if  & \alpha_1(x)=c_{i,j}\\
             \alpha_1(x) &  if  & \alpha_1(x)\in A'\cup B'
             \end{array}
   \right.,& 
     \alpha_3(x)&= \left\{ \begin{array}{lcc}
             b_{1,j} &   if  & \alpha_2(x)=a'_j
             \\
             \\ c_{1,j} &  if  & \alpha_2(x)=b'_j
             \end{array}
   \right..
\end{align*}

Notice that $\alpha_2$ is indeed a path because $\alpha_2^{-1}(U_x)=\emptyset$ for $x\not\in A'\cup B'$, $\alpha_2^{-1}(U_{a'_i})=\alpha_1^{-1}(U_{a'_i})$ and $\alpha_2^{-1}(U_{b'_i})=\alpha_1^{-1}(U_{b'_i})$ for $1\leq i\leq 2n$. Similarly, $\alpha_3$ is a path because $\alpha_3^{-1}(U_{b_{1,j}})=\alpha_2^{-1}(U_{a'_j})$ and $\alpha_3^{-1}(U_{c_{1,j}})=\alpha_2^{-1}(U_{b'_j})$.

The map $H:[0,1]\times [0,1]\rightarrow T$ defined by
$$H(s,t)= \left\{ \begin{array}{lcl}
             \alpha_1(s) & \text{if} & t\in [0,1/4) \\
             \alpha_2(s) & \text{if} & t\in [1/4,1/2] \\
             \alpha_3(s) & \text{if} & t\in (1/2,3/4) \\
             a_1 & \text{if} & t\in [3/4,1]
             \end{array}
   \right. $$
is continuous: for $1\leq i\leq 2n$ we have $H^{-1}(U_{b'_i})=\alpha_1^{-1}(U_{b'_i})\times[0,3/4)$ and $H^{-1}(U_{a'_i})=\alpha_1^{-1}(U_{a'_i})\times[0,3/4)$. For $1<i\leq n$, $H^{-1}(U_{b_{i,j}})=\alpha_1^{-1}(U_{b_{i,j}})\times [0,1/4)$ and $H^{-1}(U_{c_{i,j}})=\alpha_1^{-1}(U_{c_{i,j}})\times [0,1/4)$; in the case $i=1$ we have $$H^{-1}(U_{b_{1,j}})=\left(\alpha_1^{-1}(U_{b_{1,j}})\times [0,1/4)\right)\cup \left(\alpha_3^{-1}(U_{b_{1,j}})\times (1/2,3/4)\right)$$ and $$H^{-1}(U_{c_{1,j}})=\left(\alpha_1^{-1}(U_{c_{1,j}})\times [0,1/4)\right)\cup \left(\alpha_3^{-1}(U_{c_{1,j}})\times (1/2,3/4)\right).$$
And lastly, we have 
$$H^{-1}(U_{a_1})=\left(\alpha_1^{-1}(U_{a_1})\times[0,1/4)\right)\cup\left((1/2,1]\times(1/2,1]\right).$$

Then $\alpha_1$ is null-homotopic, and therefore $\alpha$ is null-homotopic  as well because these two loops are homotopic.
\subsection{Examples}
We present some examples by constructing finite topological spaces associated to the finite groups $\z_2$, $\z_3$ and $\z_4$ respectively. A useful way to describe finite spaces is by using \emph{Hasse diagrams} (cf. \cite{Barmak} Example 1.1.1). The Hasse diagram of a partially ordered space $X$ is a graph whose vertices are the points of $X$ and whose edges are the ordered pairs $(x,y)$ such that $x <y$ and there exists no $z \in X$ such that $x < z < y$. In the planar description of a Hasse diagram we will not draw an arrow from $x$ to $y$, using a segment instead with $x$ lower than $y$. For a finite topological space $X$ and $x,y\in X$, we define the order $\preceq$ where $x\preceq y$ if and only if $y\in U$ implies $x\in U$ for all $U$ open in $X$.

\begin{itemize}
\item[I.] For $\z_2$ we have the set $\Sigma_2=\{a_1',a_2',a,b_1',b_2',b_1,\ldots,b_4,c_1,\ldots,c_4\}$ with basic open sets $U_{c_i}=\{c_i\}$ for $1\leq i\leq 4$, $U_{b_i}=\{b_i,c_{i-1},c_i\}$ for $1\leq i\leq 4$ with $i$ taken mod $4$, $U_{b'_1}=\{b'_1\}\cup\{c_i\in\Sigma_2 \mid i\text{ is odd}\}$, $U_{b'_2}=\{b'_2\}\cup\{c_i\in\Sigma_2 \mid i\text{ is even}\}$, $U_a=C\cup B\cup\{a\}$ with $C=\{c_1,\ldots,c_4\}$ and $B=\{b_1,\ldots,b_4\}$, $U_{a'_1}=\{a'_1,b'_1,b'_2\}\cup C\cup\{b_i\in\Sigma_2 \mid i\text{ is odd}\}$ and $U_{a'_2}=\{a'_2,b'_1,b'_2\}\cup C\cup\{b_i\in\Sigma_2 \mid i\text{ is even}\}$. Below is the Hasse diagram for $\Sigma_2$:

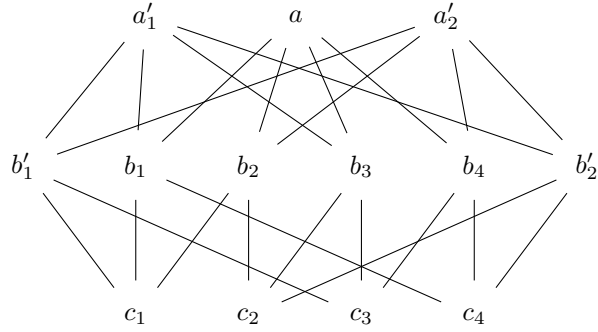
\begin{figure}[h]
    \centering
    
\begin{tikzpicture}[shorten >=1pt,node distance=1.5cm,on grid,auto]
  \tikzstyle{every state}=[draw=none,text=black,scale=1]

  \node[state]         (a2)                       {$a'_2$};
  \node[state]         (a)   [left=2cm of a2]        {$a$};
  \node[state]         (a1)  [left=2cm of a]         {$a'_1$};
  \node[state]         (b4)  [below left=2cm and .63cm of a]  {$b_2$};
  \node[state]         (b3)  [left of=b4]
  {$b_1$};
  \node[state]         (b1)  [left of=b3]         {$b'_1$};
  \node[state]         (b5)  [right of=b4]        {$b_3$};
  \node[state]         (b6)  [right of=b5]        {$b_4$};
  \node[state]         (b2)  [right of=b6]         {$b'_2$};
  \node[state]         (c1)  [below=2cm of b3]        {$c_1$};
  \node[state]         (c2)  [below=2cm of b4]        {$c_2$};
  \node[state]         (c3)  [below=2cm of b5]        {$c_3$};
  \node[state]         (c4)  [below=2cm of b6]        {$c_4$};

  \path  (a1)  edge              node {} (b1)
               edge              node {} (b2)
               edge              node {} (b3)
               edge              node {} (b5)
         (a2)  edge              node {} (b1)
               edge              node {} (b2)
               edge              node {} (b4)
               edge              node {} (b6)
         (a)   edge              node {} (b3)
               edge              node {} (b4)
               edge              node {} (b5)
               edge              node {} (b6)
         (b1)  edge              node {} (c1)
               edge              node {} (c3)
         (b2)  edge              node {} (c2)
               edge              node {} (c4)
         (b3)  edge              node {} (c1)
               edge              node {} (c4)
         (b4)  edge              node {} (c1)
               edge              node {} (c2)
         (b5)  edge              node {} (c2)
               edge              node {} (c3)
         (b6)  edge              node {} (c3)
               edge              node {} (c4);
\end{tikzpicture}
\caption{Hasse diagram for $\z_2$}
    \label{fig:my_label3}
\end{figure}

\item[II.]
For $\mathbb{Z}_3$ we have the set $\Sigma_2=\{a_1',a_2',a,b_1',b_2',b_1,\ldots,b_6,c_1,\ldots,c_6\}$ with basic open sets $U_{c_i}=\{c_i\}$ for $1\leq i\leq 6$, $U_{b_i}=\{b_i,c_{i-1},c_i\}$ for $1\leq i\leq 6$ with $i$ taken mod $6$, $U_{b'_1}=\{b'_1\}\cup\{c_i\in\Sigma_2 \mid i\text{ is odd}\}$, $U_{b'_2}=\{b'_2\}\cup\{c_i\in\Sigma_2 \mid i\text{ is even}\}$, $U_a=C\cup B\cup\{a\}$ with $C=\{c_1,\ldots,c_6\}$ and $B=\{b_1,\ldots,b_6\}$, $U_{a'_1}=\{a'_1,b'_1,b'_2\}\cup C\cup\{b_i\in\Sigma_2 \mid i\text{ is odd}\}$ and $U_{a'_2}=\{a'_2,b'_1,b'_2\}\cup C\cup\{b_i\in\Sigma_2 \mid i\text{ is even}\}$. The corresponding Hasse diagram is:

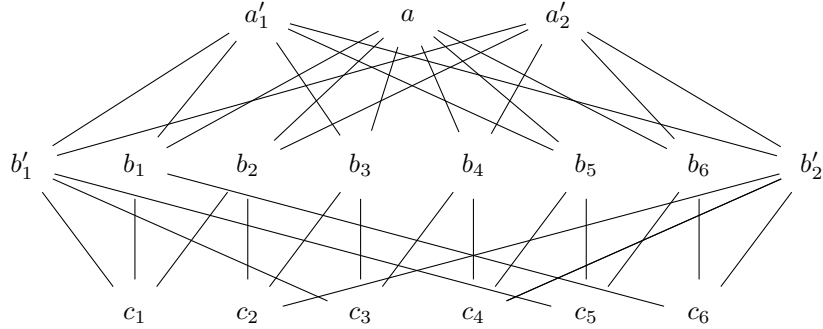
\begin{figure}[h]
    \centering
    
\begin{tikzpicture}[shorten >=1pt,node distance=1.5cm,on grid,auto]
  \tikzstyle{every state}=[draw=none,text=black,scale=1]

 \node[state]         (a2)                       {$a'_2$};
 \node[state]         (a)   [left=2cm of a2]        {$a$};
  \node[state]         (a1)  [left=2cm of a]         {$a'_1$};
  \node[state]         (b5)  [below left=2cm and .63cm of a]     {$b_3$};
  \node[state]         (b4)  [left of=b5]   
  {$b_2$};
  \node[state]         (b3)  [left of=b4]  
  {$b_1$};
  \node[state]         (b1)  [left of=b3]         {$b'_1$};
  \node[state]         (b6)  [right of=b5]        {$b_4$};
  \node[state]         (b7)  [right of=b6]        {$b_5$};
  \node[state]         (b8)  [right of=b7]        {$b_6$};
  \node[state]         (b2)  [right of=b8]         {$b'_2$};
  \node[state]         (c1)  [below=2cm of b3]        {$c_1$};
  \node[state]         (c2)  [below=2cm of b4]        {$c_2$};
  \node[state]         (c3)  [below=2cm of b5]        {$c_3$};
  \node[state]         (c4)  [below=2cm of b6]        {$c_4$};
  \node[state]         (c5)  [below=2cm of b7]        {$c_5$};
  \node[state]         (c6)  [below=2cm of b8]        {$c_6$};

  \path  (a1)  edge              node {} (b1)
               edge              node {} (b2)
               edge              node {} (b3)
               edge              node {} (b5)
               edge              node {} (b7)
         (a2)  edge              node {} (b1)
               edge              node {} (b2)
               edge              node {} (b4)
               edge              node {} (b6)
               edge              node {} (b8)
         (a)   edge              node {} (b3)
               edge              node {} (b4)
               edge              node {} (b5)
               edge              node {} (b6)
               edge              node {} (b7)
               edge              node {} (b8)
         (b1)  edge              node {} (c1)
               edge              node {} (c3)
               edge              node {} (c5)
         (b2)  edge              node {} (c2)
               edge              node {} (c4)
               edge              node {} (c6)
               edge              node {} (c4)
         (b3)  edge              node {} (c1)
               edge              node {} (c6)
         (b4)  edge              node {} (c1)
               edge              node {} (c2)
         (b5)  edge              node {} (c2)
               edge              node {} (c3)
         (b6)  edge              node {} (c3)
               edge              node {} (c4)
         (b7)  edge              node {} (c4)
               edge              node {} (c5)
         (b8)  edge              node {} (c5)
               edge              node {} (c6)       ;
\end{tikzpicture}
\caption{Hasse diagram for $\z_3$}
    \label{fig:my_label2}
\end{figure}

\item[III.] Finally, for $\mathbb{Z}_4$ we have the set $\Sigma_2=\{a_1',a_2',a,b_1',b_2',b_1,\ldots,b_8,c_1,\ldots,c_8\}$ with basic open sets $U_{c_i}=\{c_i\}$ for $1\leq i\leq 8$, $U_{b_i}=\{b_i,c_{i-1},c_i\}$ for $1\leq i\leq 8$ with $i$ taken mod $8$, $U_{b'_1}=\{b'_1\}\cup\{c_i\in\Sigma_2 \mid i\text{ is odd}\}$, $U_{b'_2}=\{b'_2\}\cup\{c_i\in\Sigma_2 \mid i\text{ is even}\}$, $U_a=C\cup B\cup\{a\}$ with $C=\{c_1,\ldots,c_8\}$ and $B=\{b_1,\ldots,b_8\}$, $U_{a'_1}=\{a'_1,b'_1,b'_2\}\cup C\cup\{b_i\in\Sigma_2 \mid i\text{ is odd}\}$ and $U_{a'_2}=\{a'_2,b'_1,b'_2\}\cup C\cup\{b_i\in\Sigma_2 \mid i\text{ is even}\}$. The corresponding Hasse diagram is:
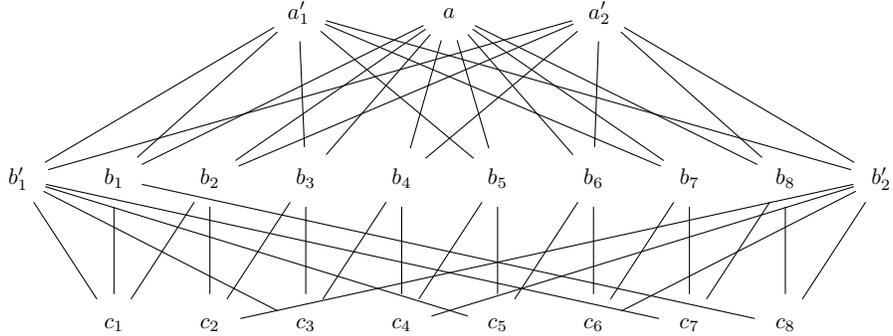
\begin{figure}[h]
    \centering
    
\begin{tikzpicture}[shorten >=1pt,node distance=1.5cm,on grid,auto]
  \tikzstyle{every state}=[draw=none,text=black,scale=.85]

  \node[state]         (a2)                       {$a'_2$};
  \node[state]         (a)   [left=2cm of a2]     {$a$};
  \node[state]         (a1)  [left=2cm of a]         {$a'_1$};
  \node[state]         (b6)  [below left=2.2cm and .63cm of a]     {$b_4$};
  \node[state]         (b5)  [left of=b6]  
  {$b_3$};
  \node[state]         (b4)  [left of=b5]         {$b_2$};
  \node[state]         (b3)  [left of=b4]         {$b_1$};
  \node[state]         (b1)  [left of=b3]         {$b'_1$};
  \node[state]         (b7)  [right of=b6]        {$b_5$};
  \node[state]         (b8)  [right of=b7]        {$b_6$};
  \node[state]         (b9)  [right of=b8]        {$b_7$};
  \node[state]         (b10) [right of=b9]        {$b_8$};
  \node[state]         (b2)  [right of=b10]         {$b'_2$};
  \node[state]         (c1)  [below of=b3, node distance=2.3cm]        {$c_1$};
  \node[state]         (c2)  [below of=b4, node distance=2.3cm]        {$c_2$};
  \node[state]         (c3)  [below of=b5, node distance=2.3cm]        {$c_3$};
  \node[state]         (c4)  [below of=b6, node distance=2.3cm]        {$c_4$};
  \node[state]         (c5)  [below of=b7, node distance=2.3cm]        {$c_5$};
  \node[state]         (c6)  [below of=b8, node distance=2.3cm]        {$c_6$};
  \node[state]         (c7)  [below of=b9, node distance=2.3cm]        {$c_7$};
  \node[state]         (c8)  [below of=b10, node distance=2.3cm]        {$c_8$};

  \path  (a1)  edge              node {} (b1)
               edge              node {} (b2)
               edge              node {} (b3)
               edge              node {} (b5)
               edge              node {} (b7)
               edge              node {} (b9)
         (a2)  edge              node {} (b1)
               edge              node {} (b2)
               edge              node {} (b4)
               edge              node {} (b6)
               edge              node {} (b8)
               edge              node {} (b10)
         (a)   edge              node {} (b3)
               edge              node {} (b4)
               edge              node {} (b5)
               edge              node {} (b6)
               edge              node {} (b7)
               edge              node {} (b8)
               edge              node {} (b9)
               edge              node {} (b10)
         (b1)  edge              node {} (c1)
               edge              node {} (c3)
               edge              node {} (c5)
               edge              node {} (c7)
         (b2)  edge              node {} (c2)
               edge              node {} (c4)
               edge              node {} (c6)
               edge              node {} (c8)
         (b3)  edge              node {} (c1)
               edge              node {} (c8)
         (b4)  edge              node {} (c1)
               edge              node {} (c2)
         (b5)  edge              node {} (c2)
               edge              node {} (c3)
         (b6)  edge              node {} (c3)
               edge              node {} (c4)
         (b7)  edge              node {} (c4)
               edge              node {} (c5)
         (b8)  edge              node {} (c5)
               edge              node {} (c6)
         (b9)  edge              node {} (c6)
               edge              node {} (c7)
         (b10) edge              node {} (c7)
               edge              node {} (c8)       ;
\end{tikzpicture}
\caption{Hasse diagram for $\z_4$}
    \label{fig:my_label}
\end{figure}
\end{itemize}

\begin{remark}
The cardinality of the finite topological space $X_{\z_n}$ constructed in this section for the group $\z_n$ is the same as in Theorem \ref{main} of the previous section applied to the group $\z_n$. However, from the discussion at the beginning of this section, the finite topological space associated to the group $\z_n\times\z_m$ by the method above turns out to be $X_{\z_n}\times X_{\z_m}$. The cardinality of the latter is greater than the cardinality provided by Theorem \ref{main} applied to the group $\z_n\times\z_m$. Actually, neither of the two constructions presented in this paper provides us with a minimal finite topological space associated to a finitely presented group $G$, in the sense of having minimal cardinality in the class of spaces $X_G$ such that $\pi(X_G)\cong G$. The Hasse diagrams presented above show that the construction described throughout Section 3 provides reduced spaces in the sense that the topology described in $X_{\z_n}$ can not be reduced by using beat points \cite{Barmak}.
\end{remark}

\bibliography{main}

\end{document}